\theoremstyle{plain} 
\newtheorem{Thm}{Theorem}[section] 
\newtheorem{Cor}[Thm]{Corollary}
\theoremstyle{definition} 
\newtheorem{Def}[Thm]{Definition}
\newtheorem*{theorem*}{Theorem}
\theoremstyle{remark} 
\newtheorem{Rem}[Thm]{Remark} 
\newtheorem{Exa}[Thm]{Example} 
\newcommand{\C}{{\mathbb C}} 
\begin{document}

\title[Property (T) for the special linear group of holomorphic functions]{On Kazhdan's Property (T) for the special linear group of holomorphic functions} 
\author{Bj\"orn Ivarsson and Frank Kutzschebauch}  
\address{Department of Natural Sciences, Engineering and Mathematics, Mid Sweden University, SE-851 70 Sundsvall, Sweden}
\email{Bjorn.Ivarsson@miun.se}
\address{Institute of Mathematics, University of Bern, Sidlerstrasse 5, CH-3012 Bern, Switzerland}
\email{frank.kutzschebauch@math.unibe.ch} 
\thanks{{\bf Acknowledgements:}  The research of the
second author was  partially supported by Schweizerische
Nationalfonds grant No. 200020-134876/1}
{\renewcommand{\thefootnote}{} \footnotetext{2000
\textit{Mathematics Subject Classification.} Primary: 18F25 32M05.
Secondary: 22D10, 32M25.}}
\date{\today} \setcounter{tocdepth}{2}

\begin{abstract}
	We investigate when  the group  $\mbox{SL}_n(\mathcal{O}(X))$ of holomorphic maps from a Stein space $X$ to $\mbox{SL}_n (\C)$ has Kazhdan's property (T) for $n\ge 3$. This provides a new class of examples of non-locally compact groups having Kazhdan's property (T). In particular we prove that the holomorphic loop group of  $\mbox{SL}_n (\C)$ has Kazhdan's property (T) for $n\ge 3$. 
Our result relies on the method of Shalom to prove Kazhdan's property (T) and the solution to  Gromov's Vaserstein problem by the authors.
\end{abstract}
\maketitle 
\bibliographystyle{alpha} 
\tableofcontents 
\section{Introduction} Kazhdan's property (T) expresses a certain rigidity for unitary Hilbert space representations of a topological group $G$. Even though it was invented by Kazhdan for discrete or more generally locally compact groups Kazhdan's property (T)  has turned up in so many different aspects of mathematics that its study in any context seems of  interest. The first examples of non-locally compact groups with this property were construced by Shalom in \cite{ShalomBGKPT}. We find it  interesting that also holomorphic objects can have it. Our main result, in particular showing that the holomorphic loop group  of  $\mbox{SL}_n (\C)$ has Kazhdan's property (T) for $n\ge 3$, is the following

\begin{theorem*} (see Theorem \ref{KPT})
 Let $n\ge 3$ and $X$ be a  Stein manifold having finitely many connected components  and with the property that all holomorphic maps from $X$ to $\mbox{SL}_n(\C)$ are null-homotopic. Then $\mbox{SL}_n(\mathcal{O}(X))$ has Kazhdan's property (T). 
\end{theorem*}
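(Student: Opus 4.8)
The plan is to deduce property (T) from Shalom's bounded-generation method, feeding it two ingredients: bounded generation of $\mathrm{SL}_n(\mathcal{O}(X))$ by elementary subgroups, and relative property (T) for the pairs $(\mathrm{SL}_2(\mathcal{O}(X))\ltimes\mathcal{O}(X)^2,\ \mathcal{O}(X)^2)$ that sit inside $\mathrm{SL}_n(\mathcal{O}(X))$ once $n\ge 3$. Recall the general mechanism: if a topological group $G$ is boundedly generated by finitely many subgroups $H_1,\dots,H_m$ (every element is a product of at most $K$ elements drawn from the $H_i$) and each pair $(G,H_i)$ has relative property (T), then $G$ has property (T). Indeed, given a continuous unitary representation with an almost invariant unit vector $\xi$, relative property (T) makes $\xi$ close to the space of $H_i$-invariants for every $i$, so $\pi(h)\xi\approx\xi$ uniformly for $h\in H_i$; with the fixed bound $K$, bounded generation propagates a controlled per-factor error to $\pi(g)\xi\approx\xi$ uniformly over all $g\in G$, and a vector almost invariant under all of $G$ below the $\sqrt2$ threshold forces a genuine invariant vector as the minimal-norm element of the closed convex hull of its orbit.

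For the first ingredient I would invoke the authors' solution of Gromov's Vaserstein problem. That result factors every null-homotopic holomorphic map $X\to\mathrm{SL}_n(\C)$ as a product of a number of elementary matrices $E_{ij}(g)$, $g\in\mathcal{O}(X)$, bounded by a constant depending only on $n$ and $\dim X$; here the assumption that $X$ has finitely many components is what yields one uniform bound $K$ valid on all of $X$. Since by hypothesis \emph{every} element of $\mathrm{SL}_n(\mathcal{O}(X))$ is null-homotopic, the whole group is boundedly generated by its root subgroups $E_{ij}(\mathcal{O}(X))\cong(\mathcal{O}(X),+)$. For $n\ge 3$ each $E_{ij}$ lies in an adjacent pair forming the $\mathcal{O}(X)^2$ inside a copy of $\mathrm{SL}_2(\mathcal{O}(X))\ltimes\mathcal{O}(X)^2$, so the finitely many subgroups $\mathcal{O}(X)^2$ boundedly generate as well.

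The second, and decisive, ingredient is relative property (T) for $(\mathrm{SL}_2(\mathcal{O}(X))\ltimes\mathcal{O}(X)^2,\ \mathcal{O}(X)^2)$; note that relative (T) for this subgroup pair automatically upgrades to relative (T) of $(\mathrm{SL}_n(\mathcal{O}(X)),\ \mathcal{O}(X)^2)$, using the same Kazhdan pair viewed inside the larger group, and descends from the pair $\mathcal{O}(X)^2$ to each single root subgroup. I would follow the spectral approach of Burger and Shalom: a representation of $\mathrm{SL}_2(\mathcal{O}(X))\ltimes\mathcal{O}(X)^2$ with almost invariant vectors but no $\mathcal{O}(X)^2$-invariant vectors yields, via the projection-valued measure of the abelian group $\mathcal{O}(X)^2$ on its Pontryagin dual, a sequence of probability measures escaping the trivial character yet converging to an $\mathrm{SL}_2(\mathcal{O}(X))$-invariant mean on the dual minus the origin. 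The task is to rule out such a mean by exploiting the transvection dynamics, under which a character $(\chi_1,\chi_2)$ is sheared to $(\chi_1,\chi_2+g\!\cdot\!\chi_1)$ by multiplication by $g\in\mathcal{O}(X)$.

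I expect this last step to be the main obstacle. Unlike the classical cases, $\mathcal{O}(X)$ is neither locally compact nor finitely generated, so its additive group is an infinite-dimensional nuclear Fréchet space whose Pontryagin dual is a space of analytic functionals; controlling the $\mathrm{SL}_2(\mathcal{O}(X))$-action on this dual and verifying that multiplication by holomorphic functions mixes characters strongly enough to preclude an invariant mean away from $0$ is where the real work lies. Everything else---passing from relative (T) of the subgroup pair to relative (T) relative to $G$, checking continuity of the representations in the compact-open topology, and running the bounded-generation argument with the Vaserstein bound $K$---is then formal.
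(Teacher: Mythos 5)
Your first ingredient is exactly the paper's: the factorization theorem (Theorem \ref{IK}, the solution of Gromov's Vaserstein problem) gives bounded elementary generation of $\mathrm{SL}_n(\mathcal{O}(X))$ once every element is null-homotopic. But your second ingredient is a genuine gap, and you say so yourself: you propose to establish relative property (T) for the pair $\left(\mathrm{SL}_2(\mathcal{O}(X))\ltimes\mathcal{O}(X)^2,\ \mathcal{O}(X)^2\right)$ by the Burger--Shalom spectral method, and then concede that carrying this out on the Pontryagin dual of $\mathcal{O}(X)^2$ is ``where the real work lies.'' A proposal that defers its decisive step is not a proof, and here the deferred step is not merely technical: the spectral machinery you invoke (projection-valued measures via the SNAG theorem, invariant means on the dual minus the origin) is a theory of \emph{locally compact} abelian groups, whereas $\mathcal{O}(X)^2$ with the compact-open topology is an infinite-dimensional Fr\'echet group. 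There is no reason the Burger--Shalom argument survives this transplant, and no known way to run it directly on $\mathcal{O}(X)^2$.

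The idea you are missing --- and it is the actual content of the paper's proof of Theorem \ref{KPT} beyond Theorem \ref{IK} --- is to sidestep the non-local-compactness entirely by producing a \emph{finitely generated dense subring} of $\mathcal{O}(X)$: embed $X$ into some $\C^N$ by Remmert's embedding theorem (this, not uniformity of the factorization bound, is what the finitely-many-components hypothesis is for), note by Oka--Weil that the restrictions of polynomials are dense in $\mathcal{O}(X)$, and conclude that $\{z_1,\dots,z_N,\sqrt2,i\}$ generates a dense subring. Shalom's theorem for topological rings (Theorem \ref{shalom}) takes precisely these two inputs --- bounded elementary generation plus a finite dense generating set --- and it is inside \emph{its} proof that the relative property (T) issue is handled: one works with the countable discrete finitely generated subring (whose dual \emph{is} compact, so the mean/measure argument applies), and density then transfers almost-invariant vectors from the Fr\'echet ring to the discrete one. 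So the correct repair of your argument is not to fight the analysis on the dual of $\mathcal{O}(X)^2$, but to insert the Remmert--Oka--Weil density step and quote Shalom's ring-theoretic theorem as a black box, which is exactly what the paper does.
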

Some generalization  to the case when not all  holomorphic maps from $X$ to $\mbox{SL}_n(\C)$ are null-homotopic is contained in the last section.

 Let's recall the definitions:   By $\mathcal{O}(X)$ we denote the algebra of holomorphic functions on $X$ endowed with compact-open topology.
By Stein's original definition, simplified by later developments,  a complex manifold $X$ is Stein (or, as Karl Stein put it, holomorphically complete) if it satisfies the following two conditions.

\smallskip\noindent
(1) Holomorphic functions on $X$ separate points, that is, if  $x,y \in X,  x\ne y$, then there is $f\in \mathcal{O}(X)$  such that $f(x) \ne  f(y)$. 

\smallskip\noindent
(2) $X$ is holomorphically convex, that is, if $K\subset X$ is compact, then its $\mathcal{O}(X))$-hull $\hat K$, consisting of all $x \in X$  with $ \vert f(x)\vert  \le {\rm max}_K \vert f\vert$  for all $f \in \mathcal{O}(X)$, is also compact. Equivalently, if $ E \subset X$  is not relatively compact, then there is $ \in \mathcal{O}(X)$  such that $f\vert_E$ is unbounded.

A domain in $\C^n$ is Stein if and only if it is a domain of holomorphy. Every noncompact Riemann surface is Stein.
By the Remmert embedding theorem (see also Remark \ref{Stein,numbers})  a connected complex manifold is Stein if and only if it is biholomorphic to a closed complex submanifold of $C^N$ for some $N$. If $X$ is  not smooth, i.e., a complex space, the notion of Stein space is defined by the same two conditions (1) and (2). A connected Stein space
is biholomorphic to some analytic subspace of $\C^n$ if and only if it has an upper bound on the dimension of its tangent spaces. We refer to \cite{For3} for more information on Stein manifolds.

Here comes the  definition of property (T), for a comprehensive introduction to Kazhdan's property (T) we  refer to \cite{BachirDeLaHarpeValetteKPT}.

\begin{Def}
	Let $G$ be a topological group, $K\subset G$ a subset, $\varepsilon > 0$, $H$ a Hilbert space, and $(\pi,H)$ a continuous unitary $G$-representation. A vector $v\in H$ is called $(K,\varepsilon)$-invariant if $\| \pi(g)v-v\|<\varepsilon \|v\|$ for all $g\in K$. 
\end{Def}
\begin{Def}
	A topological group $G$ has Kazhdan's property (T) (or is a Kazhdan group) if there is a compact $K\subset G$ and $\varepsilon > 0$ such that every continuous unitary $G$-representation with a $(K,\varepsilon)$-invariant vector contains a non-zero $G$-invariant vector. We call $(K,\varepsilon)$ a {\it Kazhdan pair} for $G$ and $\varepsilon$ is called a {\it Kazhdan constant} for $K$ and $G$. 
\end{Def}

We like to thank Pierre de la Harpe for inspiring discussions on the subject and for bringing the paper of Cornulier \cite{dCKPSCF} to our attention.

\section{Kazhdan's property (T) and the special linear group of holomorphic functions}

The present investigations rely on results by Shalom \cite{ShalomBGKPT} and the authors \cite{IvarssonKutzschebauchHFMSLG}, see also \cite{IvarssonKutzschebauchSGVP}. 
\begin{Def}
	Let $R$ be a commutative ring with unit. The group $\mbox{SL}_n(R)$ is said to be boundedly elementary generated if there is a $\nu<\infty$ such that every matrix in $\mbox{SL}_n(R)$ can be written as a product of at most $\nu=\nu_n(R)$ elementary matrices. 
\end{Def}
Recall that an elementary matrix in $\mbox{SL}_n(R)$ is a matrix of the form $I+ rE_{i,j}$ $i \ne j$, {\it i.e}, with ones on the diagonal and all entries outside the diagonal zero except for one entry.

The following is a result of Shalom from \cite{ShalomBGKPT}, see also \cite[Theorem 4.3.5]{BachirDeLaHarpeValetteKPT}, 
\begin{Thm}
	\label{shalom} Let $n\ge 3$ and $R$ be a topological commutative ring with unit. Assume that $\mbox{SL}_n(R)$ is boundedly elementary generated and that there is a finite set $\{\alpha_1,\dots,\alpha_m\}\subset R$ generating a dense subring of $R$. Then $\mbox{SL}_n(R)$ has Kazhdan's property (T). 
\end{Thm}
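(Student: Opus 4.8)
The plan is to follow Shalom's three-step pattern: relative property (T) of a pair built from a root subgroup, bounded elementary generation, and a center-of-mass (convexity) argument. Fix $n\ge 3$ and three indices $1,2,3$ inside $\mbox{SL}_n(R)$. The root subgroups $U_{13}=\{I+rE_{13}:r\in R\}$ and $U_{23}=\{I+rE_{23}:r\in R\}$ commute, since $E_{13}E_{23}=E_{23}E_{13}=0$, and together form a copy of the additive group $R^2$; it is normalized by the copy of $\mbox{SL}_2(R)$ in the upper left $2\times 2$ block. This produces an embedding $\mbox{SL}_2(R)\ltimes R^2\hookrightarrow \mbox{SL}_n(R)$ whose image I will exploit.

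The heart of the argument, and the step I expect to be the main obstacle, is to prove that the pair $(\mbox{SL}_2(R)\ltimes R^2,\ R^2)$ has relative property (T). One passes to the Pontryagin dual $\widehat{R^2}$: a unitary representation of $\mbox{SL}_2(R)\ltimes R^2$ with arbitrarily good almost invariant unit vectors but no nonzero $R^2$-fixed vector would yield a sequence of probability measures on $\widehat{R^2}\setminus\{0\}$ that is asymptotically invariant under the dual $\mbox{SL}_2(R)$-action and does not escape toward the trivial character. This is exactly where the hypotheses on $R$ enter. The assumption that a finite set $\{\alpha_1,\dots,\alpha_m\}$ generates a dense subring lets one build the required compact Kazhdan set out of finitely many matrices $I+\alpha_l E_{ij}$, after which continuity of the representation propagates invariance from the dense subring to all of $R$; and the expanding dynamics of the $\mbox{SL}_2$-action on $\widehat{R^2}$ rules out such an asymptotically invariant measure sequence. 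The resulting contradiction gives relative property (T), which concretely yields, for each $\eta>0$, a compact $Q\subset\mbox{SL}_2(R)\ltimes R^2$ and a $\delta>0$ so that any $(Q,\delta)$-invariant unit vector $v$ satisfies $\|v-P\,v\|<\eta$, where $P$ is the orthogonal projection onto the $R^2$-fixed subspace.

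Granting this, I would transfer it to every root subgroup. For any pair $i\ne j$ choose a third index $k$ (possible as $n\ge 3$); then $U_{ij}$ and $U_{kj}$ play the roles of the two factors of $R^2$ above, so by restriction the pair $(\mbox{SL}_n(R),U_{ij})$ inherits relative property (T). Fix a small $\eta>0$ and let $Q_{ij}$, $\delta_{ij}$, and $P_{ij}$ (the projection onto $U_{ij}$-fixed vectors) be the corresponding data. Put $K=\bigcup_{i\ne j}Q_{ij}$, which is compact, and $\varepsilon=\min_{i\ne j}\delta_{ij}$. If a unitary representation $(\pi,H)$ has a $(K,\varepsilon)$-invariant unit vector $v$, then $\|v-P_{ij}v\|<\eta$ for every pair, whence $\|\pi(g)v-v\|<2\eta$ for every elementary matrix $g\in U_{ij}$. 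By bounded elementary generation each $g\in\mbox{SL}_n(R)$ is a product of at most $\nu$ elementary matrices, so telescoping with unitarity gives $\|\pi(g)v-v\|<2\nu\eta$ for \emph{all} $g\in\mbox{SL}_n(R)$.

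Finally I would convert this uniform near invariance into a genuine fixed vector. For all $g,h$ one has $\mathrm{Re}\,\langle\pi(g)v,\pi(h)v\rangle=\mathrm{Re}\,\langle\pi(h^{-1}g)v,v\rangle>1-2\nu^2\eta^2$, so every vector in the closed convex hull of the orbit $\{\pi(g)v\}$ has norm squared at least $1-2\nu^2\eta^2$. Let $v_0$ be its unique element of minimal norm; by uniqueness $v_0$ is $\mbox{SL}_n(R)$-invariant, and the bound forces $v_0\ne 0$ once $2\nu^2\eta^2<1$. Choosing $\eta$ (hence $\varepsilon$) small enough that this holds makes $(K,\varepsilon)$ a Kazhdan pair, so $\mbox{SL}_n(R)$ has property (T).
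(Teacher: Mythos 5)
The paper does not prove this statement at all: it is imported as Shalom's theorem, with the proof deferred to \cite{ShalomBGKPT} and \cite[Theorem 4.3.5]{BachirDeLaHarpeValetteKPT}. Your sketch faithfully reconstructs the argument from those cited sources --- relative property (T) for the pair $(\mbox{SL}_2(R)\ltimes R^2, R^2)$ via the dual/invariant-measure argument using the finitely generated dense subring, transfer to all root subgroups (possible since $n\ge 3$), propagation of almost-invariance through bounded elementary generation, and the center-of-mass argument producing an invariant vector --- so it is essentially the same approach as the proof the paper relies on.
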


\begin{Rem}
	\label{Kconst} More precisely if there are $m$ elements in $R$ generating a dense subring then there is a compact $K$ in $\mbox{SL}_n(R)$ such that $\varepsilon=1/(22^{m+1}\nu_n(R))$ is a Kazhdan constant for $K$ and $\mbox{SL}_n(R)$, see \cite[Remark 4.3.6]{BachirDeLaHarpeValetteKPT}. 
\end{Rem}

In this paper we consider the ring $R = \mbox{SL}_n(\mathcal{O}(X))$ of holomorphic maps from a Stein space $X$ to $\mbox{SL}_n(\mathbb{C})$ endowed with compact-open topology, the natural topology for holomorphic mappings.
Using the above terminology the authors show in \cite{IvarssonKutzschebauchHFMSLG} that $\mbox{SL}_n(\mathcal{O}(X))$ is boundedly elementary generated when $X$ is a finite dimensional reduced Stein space with the property that all holomorphic mappings from $X$ to $\mbox{SL}_n(\C)$ are null-homotopic, that is homotopic (through a family of continuous maps) to a constant map.  This result is an application of the Oka-Grauert-Gromov-h-principle in Complex Analysis. For more information on that important principle in Complex Analytic Geometry we refer the interested reader to the monograph of Forstneri\v c \cite{For3}. The precise statement of the theorem proved in \cite{IvarssonKutzschebauchHFMSLG} is the following:
\begin{Thm}
	\label{IK} Let $X$ be a finite dimensional reduced Stein space and $f\colon X\to \mbox{SL}_n(\mathbb{C})$ be a holomorphic mapping that is null-homotopic. Then there exist a natural number $K$, depending only on the dimension of $X$ and $n$, and holomorphic mappings $G_1,\dots, G_{K}\colon X\to \mathbb{C}^{m(m-1)/2}$ such that $f$ can be written as a product of upper and lower diagonal unipotent matrices 
	\begin{equation*}
		f(x) = \left( 
		\begin{matrix}
			1 & 0 \cr G_1(x) & 1 \cr 
		\end{matrix}
		\right) \left( 
		\begin{matrix}
			1 & G_2(x) \cr 0 & 1 \cr 
		\end{matrix}
		\right) \ldots \left( 
		\begin{matrix}
			1 & G_K(x)\cr 0 & 1 \cr 
		\end{matrix}
		\right). 
	\end{equation*}
\end{Thm}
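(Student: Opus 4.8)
The plan is to recast the desired factorization as a lifting problem and then solve it by combining a purely topological construction with the Oka--Grauert--Gromov principle. Fix $N=n(n-1)/2$ and let $U_j(G)$ denote the lower unitriangular matrix if $j$ is odd and the upper unitriangular matrix if $j$ is even, whose strictly triangular part is $G\in\C^{N}$. Consider the holomorphic product map
\[
\pi_K\colon \bigl(\C^{N}\bigr)^{K}\longrightarrow \mbox{SL}_n(\C),\qquad (G_1,\dots,G_K)\longmapsto \prod_{j=1}^{K} U_j(G_j).
\]
Producing a holomorphic factorization of $f$ of the asserted shape is exactly the same as producing a holomorphic lift of $f$ through $\pi_K$, that is, a holomorphic section of the pulled-back fibration $f^{*}\bigl((\C^{N})^{K}\bigr)\to X$ (the fibre product of $f$ and $\pi_K$). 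Thus the theorem reduces to a section-existence statement over the Stein space $X$, and the integer $K$ will be the length of the factorization.

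First I would solve the analogous problem with \emph{continuous} in place of \emph{holomorphic}. Over $\C$ the group $\mbox{SL}_n(\C)$ is generated by unitriangular matrices, so $\pi_K$ is surjective once $K$ is large, and since $f$ is null-homotopic the pulled-back fibration admits a continuous section. The length $K$ needed here can be bounded using the covering dimension of $X$ (at most $2\dim_{\C}X$) together with the stable-range bounds for elementary factorization, so that $K$ depends only on $\dim X$ and $n$ and not on $f$. Concretely one peels off one column at a time by elementary (unitriangular) row operations, reducing $\mbox{SL}_n$ to $\mbox{SL}_{n-1}$, and the null-homotopy guarantees that at each stage the relevant obstruction classes vanish.

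Next I would invoke Gromov's Oka principle to upgrade the continuous section to a holomorphic one. For this the fibration associated with $\pi_K$ must be shown to satisfy the hypotheses of the Oka principle, i.e.\ to be a stratified elliptic submersion admitting fibre-dominating sprays over its strata. The hard part will be exactly this ellipticity: the product map $\pi_K$ is far from a submersion, its fibre dimension jumps, and so the Oka principle cannot be applied to $\pi_K$ directly. The remedy I would pursue is the inductive column-clearing decomposition from the previous step: it exhibits the lifting problem as a tower of simpler section problems whose fibres are complements of thin algebraic subvarieties (spaces of unimodular rows and the like) and affine-type bundles, each of which carries an obvious dominating spray and is therefore elliptic. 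Applying the Oka principle for sections of stratified elliptic submersions (Gromov, Forstneri\v{c}--Prezelj) stratum by stratum and gluing, the continuous section deforms to a holomorphic section.

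Finally, the uniform bound on $K$ is inherited from the topological step and the fixed combinatorics of the tower, both depending only on $\dim X$ and $n$; this yields the holomorphic factorization with $K=K(\dim X,n)$ and completes the proof. The single genuinely delicate point, around which the whole argument is organised, is the verification of stratified ellipticity of the factorization fibration; once the sprays are constructed, the passage from continuous to holomorphic solutions is automatic.
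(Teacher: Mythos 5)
The paper does not prove Theorem \ref{IK} itself but quotes it from \cite{IvarssonKutzschebauchHFMSLG}, and your outline follows essentially the same route as the proof given there: recast the factorization as a lifting problem, solve it continuously first (Vaserstein's theorem \cite{VasersteinRMDPDFAO} supplies the bound on $K$ depending only on $\dim X$ and $n$), and then upgrade to a holomorphic solution via the Gromov--Forstneri\v{c}--Prezelj Oka principle for sections of stratified elliptic submersions, using a row-by-row reduction to circumvent the failure of $\pi_K$ to be a submersion. Your plan is therefore correct in outline and matches the source's approach; the one caveat is that the ``delicate point'' you defer --- constructing the dominating sprays and verifying stratified ellipticity of the resulting tower of fibrations --- is precisely what occupies the bulk of \cite{IvarssonKutzschebauchHFMSLG}, so it is the substance rather than a finishing touch of the argument.
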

\begin{Rem}
	It is a consequence of the classical Oka-Grauert principle that the homotopy to the constant map can be chosen through a family of holomorphic maps. Therefore null-homotopic in the topological sense and holomorphic sense is equivalent in this setting. 
\end{Rem}

Combining Shalom's and the authors' result we get the following new examples of non-locally compact Kazhdan groups. For similar results for the ring of continuous functions on a finite dimensional topological space see \cite{dCKPSCF}. 
\begin{Thm}
	\label{KPT} Let $n\ge 3$ and $X$ be a  Stein manifold with finitely many connected components and with the property that all holomorphic maps from $X$ to $\mbox{SL}_n(\C)$ are null-homotopic. Then $\mbox{SL}_n(\mathcal{O}(X))$ has Kazhdan's property (T). 
\end{Thm}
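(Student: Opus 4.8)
The plan is to obtain the conclusion as a direct application of Shalom's criterion, Theorem~\ref{shalom}, to the topological commutative ring $R=\mathcal{O}(X)$ equipped with the compact-open topology. Two hypotheses must be checked: that $\mbox{SL}_n(\mathcal{O}(X))$ is boundedly elementary generated, and that $\mathcal{O}(X)$ contains a finite set generating a dense subring. The first is essentially handed to us by the input theorem. Since \emph{every} holomorphic map $X\to\mbox{SL}_n(\C)$ is assumed null-homotopic, Theorem~\ref{IK} applies to each element $f\in\mbox{SL}_n(\mathcal{O}(X))$ and expresses it as a product of at most $K$ unipotent upper- and lower-triangular matrices, with $K$ depending only on $\dim X$ and $n$. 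Each such unipotent factor is itself a product of exactly $n(n-1)/2$ elementary matrices, one per strictly triangular entry, so every element of $\mbox{SL}_n(\mathcal{O}(X))$ is a product of at most $\nu_n(\mathcal{O}(X))\le K\cdot n(n-1)/2$ elementary matrices. Before invoking Theorem~\ref{IK} I would verify that $X$, being a Stein manifold with finitely many components, is indeed a finite-dimensional reduced Stein space, which is immediate.

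For the second hypothesis I would first use the Remmert embedding theorem to realize $X$ as a closed complex submanifold of some $\C^N$; having finitely many components causes no difficulty, as $X$ remains second countable and finite-dimensional. I then claim that the $N$ restricted coordinate functions $z_1|_X,\dots,z_N|_X$, together with finitely many constant functions $c_1,\dots,c_k$ chosen so that the unital subring $\mathbb{Z}[c_1,\dots,c_k]$ is dense in $\C$ (for instance $c_1=\sqrt2$ and $c_2=i\sqrt3$), generate a dense subring of $\mathcal{O}(X)$. The analytic heart of this is the Oka--Weil theorem in the form: restrictions to $X$ of polynomials on $\C^N$ are dense in $\mathcal{O}(X)$ for the compact-open topology. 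Concretely, given $f\in\mathcal{O}(X)$, a compact $K\subset X$ and $\varepsilon>0$, Cartan's extension theorem extends $f$ to an entire function $F$ on $\C^N$; enclosing $K$ in a ball, the Taylor polynomials of $F$ converge uniformly to $F$ there, and their restrictions to $X$ are polynomials in the $z_j|_X$. Finally I would replace the complex coefficients by nearby elements of $\mathbb{Z}[c_1,\dots,c_k]$, which is harmless since the monomials are uniformly bounded on $K$, thereby exhibiting $f$ as a compact-open limit of elements of the subring generated by $\{z_j|_X\}\cup\{c_i\}$.

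Granting both hypotheses, Theorem~\ref{shalom} immediately gives that $\mbox{SL}_n(\mathcal{O}(X))$ has Kazhdan's property (T), and Remark~\ref{Kconst} even supplies an explicit Kazhdan constant in terms of the number $m=N+k$ of generators and the bound $\nu_n(\mathcal{O}(X))$ found above. I expect the main obstacle to lie entirely in the second hypothesis: bounded elementary generation is a formal consequence of Theorem~\ref{IK}, whereas producing a \emph{finite} generating set for a dense subring forces the combination of Remmert embedding, Cartan extension and Oka--Weil approximation, together with the small but essential trick of adjoining constants generating a dense subring of $\C$ (the coordinate functions alone would only yield, say, Gaussian-integer coefficients, which are not dense). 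I would also take care that density holds in the compact-open topology rather than merely uniformly on a single compact; this follows because that topology is generated by the supremum seminorms over an exhausting sequence of compact subsets of $X$, on each of which the preceding approximation applies.
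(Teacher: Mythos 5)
Your proposal is correct and takes essentially the same route as the paper: Remmert embedding plus Oka--Weil to get a finite set generating a dense subring of $\mathcal{O}(X)$ (the paper adjoins the constants $\sqrt{2}$ and $i$ where you use $\sqrt{2}$ and $i\sqrt{3}$), and Theorem~\ref{IK} combined with the factorization of each unipotent triangular matrix into $n(n-1)/2$ elementary matrices to get bounded elementary generation, after which Theorem~\ref{shalom} concludes. Your write-up merely fills in details (Cartan extension, Taylor approximation, coefficient perturbation) that the paper leaves implicit.
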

\begin{proof}
	A finite set of functions that generate a dense subring of $\mathcal{O}(X)$ can constructed as follows. Embed $X$ into $\C^N$ using Remmert's embedding theorem. By the Oka-Weil theorem $\C[z_1,\dots,z_N]|_X$ is dense in $\mathcal{O}(X)$. Finally we see that the set of functions $S=\{z_1,\dots,z_N,\sqrt{2},i\}$ generates a dense subring of $\C[z_1,\dots,z_N]$. Therefore $S$ also generates a dense subring of $\mathcal{O}(X)$. 
	
	Every unipotent matrix can be written as a product of $n(n-1)/2$ elementary matrices so $\mbox{SL}_n(\mathcal{O}(X))$ is boundedly elementary generated by Theorem \ref{IK} and by Theorem \ref{shalom} the group $\mbox{SL}_n(\mathcal{O}(X))$ is a Kazhdan group. 
\end{proof}

\begin{Rem}\label{Stein,numbers}
	Let $N$ be the smallest dimension such that $X$ embeds into $\C^N$. By work of Gromov, Eliashberg and Sch\"urmann $N$ for a connected $X$ is bounded by $\lfloor 3(\dim X)/2\rfloor + 1$ if $\dim X \ge 2$, see \cite{EliashbergESMAS} and \cite{SchurmannESSMD}. We have that $\varepsilon=1/(22^{N+3}\nu_n(\mathcal{O}(X)))$ is a Kazhdan constant by Remark \ref{Kconst}. Therefore both the minimal embedding dimension and the number of elementary matrices needed to factorize a null-homotopic holomorphic map $f\colon X \to \mbox{SL}_n(\C)$ is of great interest. The study of minimal embedding dimension is a classical difficult subject in complex analysis. The numbers $\nu_n(\mathcal{O}(X))$ as well as the corresponding numbers $\nu_n(C(T))$ (existing by work of Vaserstein \cite{VasersteinRMDPDFAO}) for the ring of continuous functions on a finite dimensional topological space $T$ are mostly unknown. A first study of these numbers in the holomorphic case can be found in \cite{IvarssonKutzschebauchNF}. 
\end{Rem}
\begin{Rem}
	Theorem \ref{KPT} also holds for reduced Stein spaces that can be embedded in some $\C^N$. The proof is exactly the same once we have the embedding. For more on the embedding dimension for Stein spaces see \cite{SchurmannESSMD}.
\end{Rem}
\begin{Rem}
	By \cite[Example 1.7.4 (iv)]{BachirDeLaHarpeValetteKPT} $\mbox{SL}_2(\C)$ is not a Kazhdan group. Since the closure of the image of a Kazhdan group under a continuous homomorphism is again a Kazhdan group, see \cite[Theorem 1.3.4]{BachirDeLaHarpeValetteKPT}, it follows that $\mbox{SL}_2(\mathcal{O}(X))$ never has Kazhdan's property (T). 
\end{Rem}
\begin{Cor}
	Let $n\ge 3$ and $X$ be a contractible Stein manifold. Then $\mbox{SL}_n(\mathcal{O}(X))$ has Kazhdan's property (T). 
\end{Cor}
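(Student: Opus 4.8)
The plan is to deduce this directly from Theorem \ref{KPT} by verifying that a contractible Stein manifold automatically satisfies both of its hypotheses. Two conditions must be checked: that $X$ has only finitely many connected components, and that every holomorphic map $X \to \mbox{SL}_n(\C)$ is null-homotopic. The first is immediate, since a contractible space is in particular connected and hence has exactly one connected component.

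For the second condition I would invoke the defining property of contractibility: the identity map $\mathrm{id}_X$ is homotopic to a constant map $c\colon X \to X$. Given any continuous map $f\colon X \to \mbox{SL}_n(\C)$, composing this homotopy with $f$ yields a homotopy from $f = f \circ \mathrm{id}_X$ to $f \circ c$, and the latter is a constant map. Hence $f$ is null-homotopic, and in particular every holomorphic map $X \to \mbox{SL}_n(\C)$, being continuous, is null-homotopic. With both hypotheses of Theorem \ref{KPT} in place, one concludes that $\mbox{SL}_n(\mathcal{O}(X))$ has Kazhdan's property (T).

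There is essentially no obstacle to overcome here: the entire analytic and group-theoretic content resides in Theorem \ref{KPT}, and this corollary merely records the elementary topological observation that a contractible space admits no nontrivial homotopy classes of maps, so that the null-homotopy hypothesis is satisfied vacuously. The only point one should be slightly careful about is that contractibility is a purely topological notion, while Theorem \ref{IK} (underlying Theorem \ref{KPT}) requires a holomorphic factorization; this is harmless, because the Remark following Theorem \ref{IK} already notes that in the Stein setting topological and holomorphic null-homotopy coincide by the Oka--Grauert principle.
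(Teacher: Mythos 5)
Your proof is correct and is exactly the deduction the paper intends (the paper states this corollary without proof, as an immediate consequence of Theorem \ref{KPT}): contractibility gives connectedness and, by composing the contracting homotopy with any map, null-homotopy of all continuous (hence all holomorphic) maps into $\mbox{SL}_n(\C)$. Your closing remark about the Oka--Grauert principle is a nice observation but not actually needed, since the hypothesis of Theorem \ref{KPT} only asks for null-homotopy in the topological sense.
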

Since $\mbox{SL}_n(\C)$ is simply connected we get the following. 
\begin{Cor}
	For $n\ge 3$ the group $\mbox{SL}_n(\mathcal{O}(\C^*))$, {\it i.e.} the holomorphic loop group of $\mbox{SL}_n(\C)$, is a Kazhdan group. 
\end{Cor}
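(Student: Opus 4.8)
The plan is to apply Theorem \ref{KPT} directly to the manifold $X = \C^*$, so the work reduces to verifying the two hypotheses of that theorem: that $\C^*$ is a Stein manifold with finitely many connected components, and that every holomorphic map $\C^* \to \mbox{SL}_n(\C)$ is null-homotopic.

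For the first hypothesis, $\C^*$ is a connected noncompact Riemann surface, and as recalled in the introduction every noncompact Riemann surface is Stein; being connected it trivially has finitely many connected components. Thus the Stein condition is immediate and needs no further argument.

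The substantive point is the null-homotopy condition, and here the simple-connectedness of $\mbox{SL}_n(\C)$ does the work. First I would observe that $\C^*$ deformation retracts onto the unit circle, so the inclusion $S^1 \hookrightarrow \C^*$ induces a bijection on free homotopy classes of maps into any target space; in particular the free homotopy classes of continuous maps $\C^* \to \mbox{SL}_n(\C)$ are in bijection with those of maps $S^1 \to \mbox{SL}_n(\C)$. Because $\mbox{SL}_n(\C)$ is a path-connected topological group its fundamental group is abelian, so free homotopy classes of loops coincide with based ones, i.e. with $\pi_1(\mbox{SL}_n(\C))$. Since $\mbox{SL}_n(\C)$ is simply connected (it deformation retracts onto the simply connected compact group $\mbox{SU}(n)$), this set is trivial, and hence every continuous---and a fortiori every holomorphic---map $\C^* \to \mbox{SL}_n(\C)$ is null-homotopic.

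With both hypotheses verified, Theorem \ref{KPT} yields that $\mbox{SL}_n(\mathcal{O}(\C^*))$ has Kazhdan's property (T) for $n \ge 3$. I do not anticipate any genuine obstacle: the only non-formal ingredient is the classical simple-connectedness of $\mbox{SL}_n(\C)$ together with the standard homotopy classification of maps out of $\C^*$, the homotopy type of a circle; all the analytic difficulty has already been absorbed into Theorem \ref{IK} and Theorem \ref{shalom}, which combine to give Theorem \ref{KPT}.
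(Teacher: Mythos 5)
Your proof is correct and takes exactly the paper's route: the paper derives this corollary from Theorem \ref{KPT} with the one-line observation ``Since $\mbox{SL}_n(\C)$ is simply connected,'' which is precisely the argument you spell out (Steinness of the noncompact Riemann surface $\C^*$ plus null-homotopy of all maps into the simply connected group $\mbox{SL}_n(\C)$). Your write-up merely makes explicit the homotopy-theoretic details the paper leaves implicit.
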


\section{A generalization} It is natural to ask what can be said when $X$ is a Stein manifold having holomorphic maps into $\mbox{SL}_n(\C)$ that are not null-homotopic. In this case Theorem \ref{IK} still can be applied to get some information. Let $\mbox{E}_n(\mathcal{O}(X))$ denote the group generated by the elementary matrices in $\mbox{SL}_n(\mathcal{O}(X))$. In general we get the following result as consequence of Theorem \ref{IK}. 
\begin{Thm}
	Let $n\ge 3$ and $X$ be a finite dimensional Stein space. Then $\mbox{E}_n(\mathcal{O}(X))$ has Kazhdan's property (T). 
\end{Thm}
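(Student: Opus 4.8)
The plan is to apply Shalom's machinery (Theorem~\ref{shalom}) directly to the elementary subgroup $\mbox{E}_n(\mathcal{O}(X))$, equipped with the subspace topology inherited from $\mbox{SL}_n(\mathcal{O}(X))$, rather than to the full special linear group. The advantage is that the bounded elementary generation hypothesis, which fails in general for $\mbox{SL}_n(\mathcal{O}(X))$ as soon as non-null-homotopic maps are present, can nonetheless be established for $\mbox{E}_n(\mathcal{O}(X))$ out of Theorem~\ref{IK}.

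The first step is to identify $\mbox{E}_n(\mathcal{O}(X))$ with the set of null-homotopic holomorphic maps $X\to\mbox{SL}_n(\C)$. Each elementary matrix $I+fE_{i,j}$ is null-homotopic via the holomorphic homotopy $t\mapsto I+tfE_{i,j}$, $t\in[0,1]$; since $\mbox{SL}_n(\C)$ is connected, any finite product of null-homotopic maps is again null-homotopic, so $\mbox{E}_n(\mathcal{O}(X))$ is contained in the set of null-homotopic maps. Conversely, Theorem~\ref{IK} exhibits every null-homotopic map as a product of elementary matrices, giving the reverse inclusion. The decisive point is quantitative: Theorem~\ref{IK} bounds the number of unipotent factors by a constant depending only on $\dim X$ and $n$, and each unipotent factor is a product of $n(n-1)/2$ elementary matrices, so every element of $\mbox{E}_n(\mathcal{O}(X))$ is a product of at most some fixed $\nu$ elementary matrices. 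Thus $\mbox{E}_n(\mathcal{O}(X))$ is boundedly elementary generated.

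The second ingredient, a finite set generating a dense subring, is supplied exactly as in the proof of Theorem~\ref{KPT} and is insensitive to the passage to the subgroup, being a statement about the coefficient ring $R=\mathcal{O}(X)$ alone: after embedding $X$ into some $\C^N$, the Oka-Weil theorem makes $\C[z_1,\dots,z_N]|_X$ dense in $\mathcal{O}(X)$, and the set $\{z_1,\dots,z_N,\sqrt2,i\}$ generates a dense subring.

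The main obstacle, and the one point requiring genuine care, is that Theorem~\ref{shalom} is phrased for $\mbox{SL}_n(R)$, and its bounded-generation hypothesis forces $\mbox{SL}_n(R)=\mbox{E}_n(R)$, so it cannot be quoted verbatim here. I would resolve this by observing that Shalom's proof in fact produces property (T) for the elementary subgroup itself: the relative-property-(T) pairs built from copies of $\mbox{SL}_2\ltimes R^2$, the root subgroups of elementary matrices whose almost-invariant vectors get propagated, and the bounded words through which that propagation is effected all take place inside $\mbox{E}_n(R)$, and the sole generation input is bounded generation of $\mbox{E}_n(R)$ by elementary matrices. With this reading of \cite{ShalomBGKPT} (see also \cite[Theorem~4.3.5]{BachirDeLaHarpeValetteKPT}), the two ingredients above yield Kazhdan's property (T) for $\mbox{E}_n(\mathcal{O}(X))$, with an explicit Kazhdan constant of the shape $1/(22^{N+3}\nu)$ as in Remark~\ref{Kconst}.
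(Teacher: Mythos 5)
Your proposal is correct and follows essentially the same route as the paper: identify $\mbox{E}_n(\mathcal{O}(X))$ with the null-homotopic holomorphic maps, deduce bounded elementary generation from Theorem~\ref{IK} (with the factor-of-$n(n-1)/2$ conversion from unipotent to elementary matrices), produce a finite set topologically generating $\mathcal{O}(X)$ exactly as in Theorem~\ref{KPT}, and run Shalom's machinery. The one difference is a point of care in your favor: where the paper simply declares the proof to be ``exactly the same'' as that of Theorem~\ref{KPT}, you explicitly flag that Theorem~\ref{shalom} as stated concerns $\mbox{SL}_n(R)$ and justify that Shalom's argument (the relative property (T) pairs and the propagation along bounded products of elementary matrices) lives entirely inside $\mbox{E}_n(R)$, which is precisely the reading the paper's terse proof implicitly relies on.
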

First note that all matrices in $\mbox{E}_n(\mathcal{O}(X))$ are null-homotopic. Now the proof is exactly the same as the proof of Theorem \ref{KPT}. The point where we use Theorem \ref{IK} is to conclude that $\mbox{E}_n(\mathcal{O}(X))$ is boundedly elementary generated.

We now have the following. 
\begin{Thm}\label{steinkpt}
	Let $n\ge 3$ and $X$ be a finite dimensional Stein space with finite embedding dimension. Then $\mbox{SL}_n(\mathcal{O}(X))$ has Kazhdan's property (T) if and only if $$\mbox{SL}_n(\mathcal{O}(X))/\overline{\mbox{E}_n(\mathcal{O}(X))}$$ has Kazhdan's property (T). 
\end{Thm}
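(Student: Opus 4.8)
The plan is to exhibit $N:=\overline{\mbox{E}_n(\mathcal{O}(X))}$ as a closed normal subgroup of $G:=\mbox{SL}_n(\mathcal{O}(X))$ that carries Kazhdan's property (T) \emph{unconditionally}, and then to read off the equivalence from the behaviour of property (T) under quotients and under extensions. Writing $E:=\mbox{E}_n(\mathcal{O}(X))$, I would first record that $E$ is normal in $G$: since $\mathcal{O}(X)$ is a commutative unital ring and $n\ge 3$, the classical normality theorem for the elementary subgroup applies, and the closure of a normal subgroup of a topological group is again normal. Thus $N\trianglelefteq G$ is closed and $G/N$ is a Hausdorff topological group. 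The key observation, which drives both implications, is that $N$ always has property (T): by the previous theorem $E$ (in its subspace topology) is a Kazhdan group, the inclusion $E\hookrightarrow G$ is a continuous homomorphism whose image has closure $N$, and so $N$ is Kazhdan by \cite[Theorem 1.3.4]{BachirDeLaHarpeValetteKPT}.

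For the forward implication I would apply the same principle to the quotient homomorphism: if $G$ has property (T), then its continuous surjective image $G/N$ has property (T) by \cite[Theorem 1.3.4]{BachirDeLaHarpeValetteKPT}. The reverse implication is the substantive one. Assuming $G/N$ has property (T) and combining this with the property (T) of $N$ established above, I would invoke the extension principle: a topological group possessing a closed normal subgroup $N$ such that both $N$ and $G/N$ are Kazhdan is itself Kazhdan. It is essential here that $N$ is known to be Kazhdan \emph{a priori} (a closed normal subgroup of a Kazhdan group need not itself be Kazhdan), which is exactly what the previous theorem supplies.

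The hard part will be the extension step in the present non-locally-compact setting, where $G$ is a Polish group modelled on the Fr\'echet space $\mathcal{O}(X)$ rather than a locally compact group. The argument I have in mind is the standard spectral-gap one: given a continuous unitary representation $(\pi,\mathcal H)$ of $G$ with a $(K,\varepsilon)$-invariant unit vector $v$ for a suitable Kazhdan pair, the property (T) of $N$ forces the orthogonal projection $Pv$ of $v$ onto the closed $G$-invariant subspace $\mathcal H^{N}$ of $N$-fixed vectors to be close to $v$, hence non-zero; one then checks that $Pv$ is almost invariant for the representation of $G/N$ on $\mathcal H^{N}$ and applies property (T) of $G/N$ to produce a non-zero $G$-invariant vector. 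The only delicate point is the construction of the compact Kazhdan set $K\subset G$: one must lift a compact Kazhdan set of $G/N$ to a compact subset of $G$, which is automatic for locally compact groups but here has to be justified from the separable completely metrizable structure of $G$, or else absorbed into a direct appeal to the extension theorem for topological groups in \cite{BachirDeLaHarpeValetteKPT}.
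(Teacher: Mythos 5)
Your proposal is correct and follows essentially the same route as the paper: the closure $N=\overline{\mbox{E}_n(\mathcal{O}(X))}$ has property (T) unconditionally by the preceding theorem together with the fact that closures (of images) of Kazhdan groups are Kazhdan, the forward implication is the hereditary property for quotients \cite[Theorem 1.3.4]{BachirDeLaHarpeValetteKPT}, and the reverse implication is the extension principle, which the paper likewise handles by a direct citation (\cite[Proposition 1.7.6 and Remark 1.7.9]{BachirDeLaHarpeValetteKPT} for Fr\'echet groups) rather than by the spectral-gap argument you sketch. Your additional verification of normality of the elementary subgroup is a detail the paper leaves implicit, but it does not change the structure of the argument.
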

\begin{proof}
	Let $H$ be a subgroup of a topological group $G$. It is easy to check from the definition that $\overline{H}$ is a Kazhdan group when $H$ is. The result now follows from \cite[Proposition 1.7.6 and Remark 1.7.9]{BachirDeLaHarpeValetteKPT} which says that a Frechet group $G$ has Kazhdan's property (T) if, for a normal closed subgroup $N$, both $G/N$ and $N$ has. The other implication follows from \cite[Theorem 1.3.4]{BachirDeLaHarpeValetteKPT} which says that Kazhdan's property (T) is inherited by quotients. 
\end{proof}
\begin{Rem}
	We don't know if it always holds that $\mbox{E}_n(\mathcal{O}(X))= \overline{\mbox{E}_n(\mathcal{O}(X))}$ when we equip $\mbox{SL}_n(\mathcal{O}(X))$ with the compact-open topology. A Stein manifold is homotopy equivalent to a finite dimensional CW-complex and when this complex is finite then $\mbox{E}_n(\mathcal{O}(X))= \overline{\mbox{E}_n(\mathcal{O}(X))}$. However the complex can be infinite and we believe that there are examples where $\mbox{E}_n(\mathcal{O}(X)) \neq \overline{\mbox{E}_n(\mathcal{O}(X))}$. 
\end{Rem}
\begin{Exa}
	As an application of Theorem \ref{steinkpt} we study the quadrics $Q_k=\{z\in \C^{k+1}; z_1^2 + \cdots + z_{k+1}^2 = 1\}$, $k\ge 1$. We claim that $Q_k$ is homotopy equivalent to the $k$-dimensional sphere $S^k$. Indeed $Q_k$ is isomorphic to the homogeneous space $\mbox{SO}_{k+1}(\C)/\mbox{SO}_{k}(\C)$, that is the quotient of the two reductive groups $\mbox{SO}_{k+1}(\C)$ and $\mbox{SO}_{k}(\C)$. By the Mostow decomposition theorem, see \cite{MostowCFKS,MostowSNDTSSG} or \cite[Section 3.1]{HeinznerGITSS}, a homogeneous space of complex reductive Lie groups $K^{\C}/L^{\C}$ admits a strong deformation retraction onto the quotient of their maximal compact subgroups $K/L$. In our case this quotient is $\mbox{SO}_{k+1}(\mathbb{R})/\mbox{SO}_{k}(\mathbb{R})$ which is isomorphic to $S^k$.
	
	We have $$\mbox{SL}_n(\mathcal{O}(Q_k))/\mbox{E}_n(\mathcal{O}(Q_k))=\pi_k(\mbox{SL}_n(\C)).$$ Let $n\ge 3$. The homotopy groups $\pi_k(\mbox{SL}_n(\C))$ are abelian. Discrete groups that are Kazhdan groups have finite abelianization \cite[Corollary 1.3.6]{BachirDeLaHarpeValetteKPT} and finite groups are Kazhdan groups. Therefore $\mbox{SL}_n(\mathcal{O}(Q_k))$ is a Kazhdan group precisely when $\pi_k(\mbox{SL}_n(\C))$ is finite. The homotopy groups of $\mbox{SL}_n(\C)$ are known to be infinite precisely when $k$ is odd and $3\le k \le 2n-1$, see for example \cite{MimuraTodaTLG}. Therefore $\mbox{SL}_n(\mathcal{O}(Q_k))$ is a Kazhdan group if and only if $k=1$, $k$ even, or $k\ge 2n$. 
\end{Exa}


\end{document}